\newtheorem{thm}{Theorem}
\newtheorem{lem}[thm]{Lemma}
\newtheorem{prop}[thm]{Proposition}
\newtheorem*{defn}{Definition}
\newtheorem*{theorm}{Theorem}
\begin{document}

\title{Affine coordinates and finiteness}
\author{Julien Giol}
\date{February 2010}

\address{C\'egep du Vieux-Montr\'eal, Montr\'eal, Canada}
\email{julien.giol@gmail.com}

\thanks{This research was conducted while I was a postdoc at Texas A\&M University. 
I am grateful to this institution, and to Gilles Pisier in particular, for their support during this period.}

\subjclass[2000]{14M15, 46L10}

\keywords{projection, von Neumann algebra, Grassmannian, affine coordinates, finiteness}









\begin{abstract}
We show that a von Neumann algebra is finite if and only if its Grassmannians are small in a certain sense related to the atlas of affine coordinates.
\end{abstract}

\maketitle

\section{Introduction}

A set is usually called finite if it can be put in bijection with a set of the form $\{1,\ldots,n\}$ for some natural number $n$.
Yet this is not the definition operator algebraists look at when defining the dichotomy finite/infinite for von Neumann algebras.
Instead, they mimick a definition due to Dedekind which requires the Axiom of Choice to be proven equivalent to the classical one within Zermelo-Fraenkel set theory.

A set is called Dedekind-infinite if it can be put in bijection with a proper subset of itself.
Then a set $X$ is Dedekind-finite if it is not Dedekind-infinite, which amounts to the following:
$$ Y\subseteq X \quad\mbox{and}\quad Y\simeq X\quad\Rightarrow\quad Y=X$$
where $Y\simeq X$ denotes the equivalence relation: there is a bijection from $Y$ onto $X$.


A fundamental quality of von Neumann algebras is that they contain plenty of projections, i.e. self-adjoint idempotents.
More specifically, denoting
$$P(M)=\{p\in M\,|\,p^2=p=p\sp*\}$$
the set of projections in a von Neumann algebra $M$, we have that the span of $P(M)$ is dense in $M$ with respect to the operator norm topology.
To exploit this abundance of projections is the key idea that led Murray and von Neumann to the celebrated classification of factors, i.e. von Neumann algebras $M$ such that $M'\cap M=\mathbb{C}1$, into types $\rm{I}_n, \rm{I}_\infty, \rm{II}_1, \rm{II}_\infty, \rm{III}$.

The set $P(M)$ is always equipped with the following relations:
\begin{itemize}
\item \emph{order:} $p\leq q$ if the range of $p$, $\mbox{Im}\,p$, is contained in $\mbox{Im}\,q$. 
Note that this is equivalent to the algebraic condition $pq=qp=p$.
\item \emph{equivalence:} $p\sim q$ if there exists $u$ in $M$ such that $p=u\sp*u$ and $q=uu\sp*$. 
Such an element $u$ realizes an isometry from $\mbox{Im}\,p$ onto $\mbox{Im}\,q$. 
The so-called Murray-von Neumann equivalence relation $p\sim q$ means that the closed subspaces $\mbox{Im}\,p$ and $\mbox{Im}\,q$ are isometric within $M$. 
\item \emph{homotopy:}  $p$ and $q$ are called homotopic if there exists a projection-valued path connecting $p$ and $q$ within $P(M)$.
In a von Neumann algebra, two projections $p,q$ are homotopic if and only if they are \emph{unitarily equivalent}, i.e. if there exists a unitary $u$ such that $q=upu\sp*$.
\end{itemize}


\begin{defn}
A projection $q$ in $P(M)$ is called finite if the following holds
$$ p\leq q \quad\mbox{and}\quad p\sim q\quad\Rightarrow\quad p=q$$
for every projection $q$ in $P(M)$.
The von Neumann algebra $M$ is called finite if the unit $1$ is a finite projection.
\end{defn}

Remak that $M$ is finite if and only if every projection $p$ in $P(M)$ is finite.
Observe also that $M$ is finite if and only if every isometry is onto.
Finally, note that every von Neumann algebra contained in a finite one is finite.

If homotopy always implies Murray-von Neumann equivalence, these two notions coincide in a finite von Neumann algebra.
This is actually characteristic for finiteness.

Examples of finite von Neumann algebras include finite-dimensional $C\sp*$-algebras and group von Neumann algebras.
The algebra $B(H)$ of bounded linear operators on the infinite-dimensional separable Hilbert space $H$ is a typical example of infinite von Neumann algebra.

Most of the early classification results on von Neumann algebras involve mostly set-theoretical approaches and arguments. 
The purpose of this paper is to show that one can recover the same dichotomy finite/infinite from a geometric perspective.

When identifying as usual a closed subspace $F$ in $H$ with the projection onto $F$, we obtain a natural way to generalize the notion of Grassmannian.
First, the Grassmannian $G(k,n)$ of $k$-dimensional subspaces in $\mathbb{C}^n$ is identified with the projections of rank $k$ in $M_n(\mathbb{C})$.
Thus the Grassmannians $G(k,n)$ can be seen as the building blocks of $P(M_n(\mathbb{C}))$.
More generally, the set of projections $P(M)$ splits into connected components which correspond to the equivalence classes of homotopy or, equivalently, of unitary equivalence.
It follows that the unitary orbits
$$G_p(M)=\{upu\sp*\,|\,u\in U(M)\},$$
where $u$ runs over $U(M)=\{u\in M\,|\, u\sp*u=uu\sp*=1\}$ the unitary group of $M$, constitute an infinite-dimensional analogue of the Grassmannians $G(k,n)$.

We show in Section \ref{affinecoordinates} that the classical \cite{griffithsharris} atlas of affine coordinates on $G(k,n)$ has an analogue in the context of $G_p(M)$.
The affine coordinates map is given essentially by a formula used by Kovarik \cite{kovarik77} to construct piecewise affine idempotent-valued paths.
The inverse of this map yields a so-called rational parametrization of the open unit ball centered at $p$
$$U_p=\{q\in P(M)\,|\,\|q-p\|<1\}.$$

\begin{theorm}
A von Neumann algebra $M$ is finite if and only if $U_p\cap U_q \neq\emptyset$ for every pair of homotopic projections in $P(M)$.
\end{theorm}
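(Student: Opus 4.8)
The plan is to reduce the statement to a purely geometric criterion for the overlap of two affine charts. Writing $p^{\perp}=1-p$, I would first isolate the following characterization, valid in an arbitrary von Neumann algebra: $U_p\cap U_q\neq\emptyset$ if and only if the two ``opposite corners'' $p\wedge q^{\perp}$ and $p^{\perp}\wedge q$ are Murray--von Neumann equivalent. The starting point is that $\|r-p\|<1$ forces $r$ to be the graph of an operator over $\mathrm{Im}\,p$ in the affine coordinates of Section~\ref{affinecoordinates}; in particular $r\sim p$ and $1-r\sim 1-p$ (and likewise for $q$), so a common point $r\in U_p\cap U_q$ already yields $p\sim q$ and $p^{\perp}\sim q^{\perp}$. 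To analyze the corners I would invoke the two--projection (Halmos) decomposition inside $M$: the commuting part splits into $p\wedge q$, $p^{\perp}\wedge q^{\perp}$, $p\wedge q^{\perp}$, $p^{\perp}\wedge q$, while on the generic part $p$ and $q$ are in generic position through an angle operator, all relevant projections and partial isometries lying in $M$.

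For the ``if'' half of the characterization I would build an explicit common point by bisecting angles. On $p\wedge q$ set $r=p\wedge q$, on $p^{\perp}\wedge q^{\perp}$ set $r=0$, on the generic part take the projection at half the angle operator (its distance to both $p$ and $q$ is at most $\sin(\pi/4)=2^{-1/2}<1$), and on $(p\wedge q^{\perp})\oplus(p^{\perp}\wedge q)$, using a partial isometry $w$ realizing $p\wedge q^{\perp}\sim p^{\perp}\wedge q$, take $r=\tfrac12(p\wedge q^{\perp}+p^{\perp}\wedge q+w+w^{*})$; a direct computation shows this is a projection at distance $2^{-1/2}$ from each of $p$ and $q$. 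Since these blocks are orthogonal and reduce $p$, $q$ and $r$, one gets $\|r-p\|,\|r-q\|\le 2^{-1/2}<1$. This settles the forward implication of the theorem: if $M$ is finite it carries a center-valued trace $T$; homotopic projections are unitarily equivalent, hence $p\sim q$ and $T(p)=T(q)$, and comparing traces across the decomposition (the two generic parts being equivalent) gives $T(p\wedge q^{\perp})=T(p^{\perp}\wedge q)$, whence $p\wedge q^{\perp}\sim p^{\perp}\wedge q$ and $U_p\cap U_q\neq\emptyset$.

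For the converse I would argue contrapositively. If $M$ is not finite, the unit is an infinite projection, so there is an isometry $v\in M$ with $v^{*}v=1$ and $p_0:=1-vv^{*}\neq 0$. The projections $f_n:=v^{n}p_0v^{*n}$ are pairwise orthogonal and each equivalent to $p_0$, and I would set $P=\sum_{n\ge 0}f_{2n}$ and $Q=\sum_{n\ge 1}f_{2n}=P-f_0$. One checks $P\sim Q$ and $1-P\sim 1-Q$ by index-shifting the $f_n$ and absorbing one extra copy of $p_0$, so $P$ and $Q$ are unitarily equivalent, hence homotopic, yet $P\wedge Q^{\perp}=f_0\neq 0$ while $P^{\perp}\wedge Q=0$. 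By the characterization $U_P\cap U_Q=\emptyset$, so the overlap condition fails. I expect the main obstacle to be precisely the ``only if'' half of the characterization, that is, showing that no projection can be a graph over both $P$ and $Q$: here the induced equivalences alone are not contradictory (all ranks and coranks match), and one must use the affine-coordinate structure itself---concretely, that being a graph over $P$ forces the compression $P|_{\mathrm{Im}\,r}$ to be onto $\mathrm{Im}\,P$, whereas being a graph over $Q$ confines $r$ to miss the $f_0$-direction. Turning this Hilbert-space picture into an equivalence statement about the corner projections inside a general $M$, phrased through the parametrizing operators rather than vectors, is the delicate point.
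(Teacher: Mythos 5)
Your finite half is sound, and it is in substance the paper's own argument: the Halmos two-projection decomposition (which stays inside $M$), the half-angle projection on the generic part, and the projection $\tfrac12(e+f+w+w^*)$ built from a partial isometry between the corners; your center-valued trace argument is a legitimate substitute for the cancellation step the paper cites from Blackadar. The genuine gap is in the infinite half, and it sits exactly where you flagged it: the ``only if'' direction of your characterization is not merely delicate, it is \emph{false} for general pairs of projections, so no argument about parametrizing operators can establish it. Take $H=\mathbb{C}e_0\oplus\bigoplus_{n\geq1}\mathbb{C}^2$, let $p$ be the projection onto $\mathbb{C}e_0\oplus\overline{\mbox{span}}\{e_n^{(1)}\}$ and $q$ the projection onto $\overline{\mbox{span}}\{\cos\theta_n\,e_n^{(1)}+\sin\theta_n\,e_n^{(2)}\}$, with $\theta_n\in[\pi/4,\pi/2)$ and $\theta_n\to\pi/2$. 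Then $p\wedge q^\perp$ is the rank-one projection onto $\mathbb{C}e_0$ and $p^\perp\wedge q=0$, so the corners are inequivalent; yet $U_p\cap U_q\neq\emptyset$. Indeed, by Lemma \ref{corokovarik}, a projection $r$ lies in $U_p\cap U_q$ exactly when $F=\mbox{Im}\,r$ satisfies $H=F\oplus\mbox{Ker}\,p=F\oplus\mbox{Ker}\,q$; such an $F$ is the graph of a bounded $A:\mbox{Im}\,p\to\mbox{Ker}\,p$, and the second condition says that $P_q(1+A)|_{\mbox{Im}\,p}$ is a bijection onto $\mbox{Im}\,q$. In coordinates this operator is $(\lambda,\xi)\longmapsto\lambda Sa+(C+SB)\xi$ from $\mathbb{C}\oplus\ell^2$ to $\ell^2$, where $S=\mbox{diag}(\sin\theta_n)$, $C=\mbox{diag}(\cos\theta_n)$, $a=Ae_0$, and $B$ is the remaining block of $A$. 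Since $\inf_n\sin\theta_n>0$, $S$ is invertible, so $a$ and $B$ can be chosen to make $Sa$ and $C+SB$ completely arbitrary: taking $Sa=\delta_1$ (first basis vector) and $C+SB=V$ an isometry of $\ell^2$ onto $\{\delta_1\}^\perp$ yields a bijection, hence a common point. A generic part whose angles accumulate at $\pi/2$ can absorb a corner defect; this is precisely the phenomenon studied by Lauzon and Treil \cite{lauzontreil04}. (Your characterization is correct in finite algebras --- your trace argument proves it --- and for commuting pairs, but not in general.)

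The good news is that your application only needs the commuting comparable case $Q\lneq P$, and there the disjointness is elementary, with no characterization needed. Suppose $r\in U_P\cap U_Q$ and put $F=\mbox{Im}\,r$. By Lemma \ref{corokovarik} again, $H=F\oplus\mbox{Ker}\,P=F\oplus\mbox{Ker}\,Q$, while $Q\leq P$, $Q\neq P$ give $\mbox{Ker}\,P\subsetneq\mbox{Ker}\,Q$. Pick $x\in\mbox{Ker}\,Q\setminus\mbox{Ker}\,P$ and write $x=f+k$ with $f\in F$ and $k\in\mbox{Ker}\,P\subseteq\mbox{Ker}\,Q$; then $f=x-k\in F\cap\mbox{Ker}\,Q=\{0\}$, so $x=k\in\mbox{Ker}\,P$, a contradiction. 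Inserting this three-line argument in place of the false characterization completes your infinite half, and in fact makes it more self-contained than the paper's own treatment, which derives the same disjointness from Kovarik's formula together with Theorem 2.4 of \cite{giol05}. The rest of your infinite half --- the construction of $P$ and $Q$ from an isometry, the verification that $P\sim Q$ and $P^\perp\sim Q^\perp$ by index shifting, hence unitary equivalence and homotopy --- is correct as written, and has the small advantage of working directly in any infinite $M$ without first reducing to the properly infinite case.
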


One way to look at this result is to say that a von Neumann algebra is finite if its Grassmannians are small in a certain sense.
On the other hand, it is infinite if it contains two disjoint open unit balls in a certain Grassmannian.
Actually, as we notice at the end of Section \ref{infinitecase}, an infinite von Neumann algebra contains a Grassmannian with infinitely many pairwise disjoint unit balls.

The paper is organized as follows.
The Section \ref{preliminaries} is devoted to some preliminary results concerning idempotents.
All of these can be found elsewhere, but we choose to include them here for the reader's convenience.
We show how to define affine coordinates on Grassmannians in von Neumann algebras in Section \ref{affinecoordinates}.
Again, our discussion of the $G(k,n)$ is essentially the one in \cite{griffithsharris}.
But we feel that it will help the reader to find this account here.
The proof of the Theorem is divided into two parts.
The finite case is treated in Section \ref{finitecase}, the infinite one in Section \ref{infinitecase}.

The results contained in this paper constitute a natural continuation of the following list of references \cite{kovarik77,zemanek79,aupetit81,esterle83,tremon85,holmes92,lauzontreil04,giol05,giol07}.

\section{Preliminary results}\label{preliminaries}

An idempotent $p$ in $B(H)$ is an element such that $p^2=p$.
It is characterized by the decomposition of $H$ into the topological direct sum of its range and its nullspace $H=\mbox{Im}\,p\oplus\mbox{Ker}\,p$.
We denote $p\sp\perp$ the idempotent $p\sp\perp=1-p$.
Note that this operation amounts to exchanging the roles of the range $\mbox{Im}\,p$ and the nullspace $\mbox{Ker}\,p$.

\begin{lem}\label{imker}
Let $p,q$ be idempotents in $B(H)$.
The following assertions are equivalent:
\begin{enumerate}[(i)]
\item ${Im}\,p=\mbox{Im}\,q$  (respectively $\mbox{Ker}\,p=\mbox{Ker}\,q$)
\item $p\sp\perp q=q\sp\perp p=0$ (respectively  $pq\sp\perp=qp\sp\perp=0$)
\end{enumerate}
\end{lem}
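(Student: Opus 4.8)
The plan is to reduce everything to a single observation about ranges, namely that for idempotents $p,q$ one has $p\sp\perp q=0$ if and only if $\mbox{Im}\,q\subseteq\mbox{Im}\,p$, then to assemble the two inclusions, and finally to dualize via the perp operation to recover the kernel statement.

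First I would record the elementary fact that an idempotent $p$ acts as the identity on its range: if $y\in\mbox{Im}\,p$, say $y=px$, then $py=p^2x=px=y$, so that $\mbox{Im}\,p=\{x\in H\,|\,px=x\}$. This is the only nontrivial input, and it is what lets one pass between the geometric condition on ranges and the algebraic identities. Note in particular that here $p$ is merely idempotent, not an orthogonal projection, so it is this ``identity on the range'' property, rather than self-adjointness, that must be invoked.

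Next I would establish the key equivalence. The identity $p\sp\perp q=0$ means $(1-p)q=0$, i.e. $q=pq$. If this holds, then every vector of $\mbox{Im}\,q$ has the form $qx=p(qx)\in\mbox{Im}\,p$, so $\mbox{Im}\,q\subseteq\mbox{Im}\,p$. Conversely, if $\mbox{Im}\,q\subseteq\mbox{Im}\,p$, then for every $x$ the vector $qx$ lies in $\mbox{Im}\,p$, whence $p(qx)=qx$ by the previous paragraph; thus $pq=q$, that is $p\sp\perp q=0$. This gives $p\sp\perp q=0\iff\mbox{Im}\,q\subseteq\mbox{Im}\,p$, and the same argument with the roles of $p$ and $q$ exchanged gives $q\sp\perp p=0\iff\mbox{Im}\,p\subseteq\mbox{Im}\,q$. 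Combining the two, both identities hold simultaneously exactly when $\mbox{Im}\,p=\mbox{Im}\,q$, which is the first equivalence (i)$\iff$(ii).

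Finally I would obtain the parenthetical kernel statement for free by applying the range statement to the idempotents $p\sp\perp$ and $q\sp\perp$. Since $\mbox{Ker}\,p=\mbox{Im}\,p\sp\perp$ and $(p\sp\perp)\sp\perp=p$, the condition $\mbox{Ker}\,p=\mbox{Ker}\,q$ becomes $\mbox{Im}\,p\sp\perp=\mbox{Im}\,q\sp\perp$, which by what precedes is equivalent to $(p\sp\perp)\sp\perp q\sp\perp=(q\sp\perp)\sp\perp p\sp\perp=0$, i.e. to $pq\sp\perp=qp\sp\perp=0$. I do not anticipate any genuine obstacle here; the only points requiring care are to use the idempotent property correctly as above and to keep the left/right placement of the factors straight when translating each inclusion into its algebraic counterpart.
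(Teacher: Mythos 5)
Your proof is correct and follows essentially the same route as the paper's: both reduce each identity to a range containment (your ``identity on the range'' fact is exactly the paper's observation that $\mbox{Ker}\,p\sp\perp=\mbox{Im}\,p$ for an idempotent) and then deduce the kernel case by replacing $p,q$ with $p\sp\perp,q\sp\perp$. Your write-up simply spells out the details that the paper leaves implicit.
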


\begin{proof}
Since $\mbox{Ker }\,p\sp\perp=\mbox{Im }\,p$, we see that the identity $p\sp\perp q=0$ is equivalent to the containment $\mbox{Im}\,q\subseteq \mbox{Im}\,p$.
Likewise $q\sp\perp p=0$ is equivalent to $\mbox{Im}\,p\subseteq \mbox{Im}\,q$ and the first equivalence follows.
The other equivalence can be established by replacing $p$ and $q$ with $p\sp\perp$ and $q\sp\perp$.
\end{proof}

\begin{lem}\label{kerpq}
Let $p,q$ be idempotents in $B(H)$.
We have:
\begin{enumerate}[(i)]
\item $\mbox{Ker}\,(p+q-1)=\mbox{Im}\,p\cap\mbox{Ker}\,q\oplus\mbox{Ker}\,p\cap\mbox{Im}\,q$
\item $\mbox{Ker}\,(p-q)=\mbox{Im}\,p\cap\mbox{Im}\,q\oplus\mbox{Ker}\,p\cap\mbox{Ker}\,q$
\item $\mbox{Ker}\,(pq-qp)=\mbox{Ker}\,(p+q-1)\oplus \mbox{Ker}\,(p-q)$
\end{enumerate}
\end{lem}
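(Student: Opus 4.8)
The plan is to reduce all three identities to the four ``corner'' subspaces
$$A=\mbox{Im}\,p\cap\mbox{Im}\,q,\quad B=\mbox{Im}\,p\cap\mbox{Ker}\,q,\quad C=\mbox{Ker}\,p\cap\mbox{Im}\,q,\quad D=\mbox{Ker}\,p\cap\mbox{Ker}\,q.$$
Since $A+B\subseteq\mbox{Im}\,p$ and $C+D\subseteq\mbox{Ker}\,p$ while $\mbox{Im}\,p\cap\mbox{Ker}\,p=0$, and since each of the pairs $A,B$ and $C,D$ already meets trivially, the sum $A\oplus B\oplus C\oplus D$ is genuinely direct; this is what legitimizes the direct-sum symbols in the statement. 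On each corner both $p$ and $q$ act as one of the scalars $0$ or $1$, so in particular they commute there, which immediately yields the ``easy'' inclusions $A\oplus B\oplus C\oplus D\subseteq\mbox{Ker}\,(pq-qp)$, and analogously $B\oplus C\subseteq\mbox{Ker}\,(p+q-1)$ and $A\oplus D\subseteq\mbox{Ker}\,(p-q)$.

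For (i) and (ii) I would prove the reverse inclusions by the same trick of applying $p$ and $q$ to a kernel vector. For (i), if $(p+q-1)x=0$ then $px+qx=x$; applying $p$ gives $pqx=0$ and applying $q$ gives $qpx=0$, so $px\in\mbox{Im}\,p\cap\mbox{Ker}\,q=B$ and $qx\in\mbox{Ker}\,p\cap\mbox{Im}\,q=C$, whence $x=px+qx\in B\oplus C$. For (ii), if $(p-q)x=0$ then $y:=px=qx$ satisfies $py=y=qy$, so $y\in A$, while $x-y$ is killed by both $p$ and $q$, so $x-y\in D$ and $x=y+(x-y)\in A\oplus D$. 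These two computations are routine.

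The real work is the reverse inclusion in (iii), namely $\mbox{Ker}\,(pq-qp)\subseteq A\oplus B\oplus C\oplus D$; combined with (i) and (ii) this rewrites $A\oplus B\oplus C\oplus D$ as $\mbox{Ker}\,(p+q-1)\oplus\mbox{Ker}\,(p-q)$ and finishes the lemma. Given $x$ with $pqx=qpx$, the idea is to exhibit the four components explicitly by setting $a=pqx$, $b=px-pqx$, $c=qx-pqx$ and $d=x-px-qx+pqx$, which manifestly sum to $x$. I would then check that each lands in its intended corner. The key intermediate fact, which I expect to be the main obstacle and the only place where the commutation hypothesis is actually used, is that $a=pqx=qpx$ is fixed by $q$, so that $a\in A$ and consequently $qpqx=a$; once this is in hand, the verifications $b\in B$, $c\in C$ and $d\in D$ follow by applying $p$ and $q$ to each candidate and repeatedly using $p^2=p$, $q^2=q$ together with $pqx=qpx$. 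This explicit splitting is the heart of the argument; everything else is bookkeeping with the decompositions $H=\mbox{Im}\,p\oplus\mbox{Ker}\,p$ and $H=\mbox{Im}\,q\oplus\mbox{Ker}\,q$.
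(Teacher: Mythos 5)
Your proof is correct and follows the same overall strategy as the paper: reduce everything to the four corner subspaces $A=\mbox{Im}\,p\cap\mbox{Im}\,q$, $B=\mbox{Im}\,p\cap\mbox{Ker}\,q$, $C=\mbox{Ker}\,p\cap\mbox{Im}\,q$, $D=\mbox{Ker}\,p\cap\mbox{Ker}\,q$. Your argument for (i) is the paper's argument in different notation (the paper writes $px=q\sp\perp x$ and $p\sp\perp x=qx$, which is exactly your ``apply $p$ and $q$'' step); for (ii) the paper just substitutes $q\sp\perp$ for $q$ in (i) instead of arguing directly, a cosmetic difference. The one substantive divergence is in (iii): the paper observes that $p$ and $q$ commute on $\mbox{Ker}\,(pq-qp)$ and appeals abstractly to simultaneous diagonalization of commuting idempotents to obtain the four-corner decomposition, whereas you carry out that diagonalization explicitly, writing $x=a+b+c+d$ with $a=pqx$, $b=p(1-q)x$, $c=(1-p)qx$, $d=(1-p)(1-q)x$ and checking each summand lands in its corner via the key identity $qpqx=pqx$. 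Your version buys some rigor at the cost of computation: the paper's restriction-and-diagonalize step tacitly requires $\mbox{Ker}\,(pq-qp)$ to be invariant under $p$ and $q$ (true, but never verified in the paper), and your explicit splitting bypasses that issue entirely, since the four components are produced directly and only the hypothesis $pqx=qpx$ is used. Both arguments then conclude identically: the corners regroup as $(B\oplus C)\oplus(A\oplus D)=\mbox{Ker}\,(p+q-1)\oplus\mbox{Ker}\,(p-q)$ by (i) and (ii).
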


\begin{proof}
First note that each of the subspaces $\mbox{Im}\,p\cap\mbox{Ker}\,q$ and $\mbox{Ker}\,p\cap\mbox{Im}\,q$ is contained in $\mbox{Ker}\,(p+q-1)$.
Since for instance  $\mbox{Im}\,p$ and $\mbox{Ker}\,p$ are in direct sum, we get $\mbox{Ker}\,(p+q-1)\supseteq\mbox{Im}\,p\cap\mbox{Ker}\,q\oplus\mbox{Ker}\,p\cap\mbox{Im}\,q$.
Now take $x$ in $\mbox{Ker}\,(p+q-1)$.
Note that $px=q\sp\perp x$ and that $p\sp\perp x=qx$.
Writing $x=px+p\sp\perp x$ we see that $px=q\sp\perp x$ belongs to $\mbox{Im}\,p\cap\mbox{Ker}\,q$ and that $p\sp\perp x=qx$ belongs to $\mbox{Ker}\,p\cap\mbox{Im}\,q$.
This proves that  $\mbox{Ker}\,(p+q-1)$ is contained in $\mbox{Im}\,p\cap\mbox{Ker}\,q\oplus\mbox{Ker}\,p\cap\mbox{Im}\,q$, which completes the proof of \emph{(i)}.
It suffices to change $q$ into $q\sp\perp$ to deduce \emph{(ii)} from \emph{(i)}.
Since $p$ and $q$ commute on the subspace $\mbox{Ker}\,(pq-qp)$, it is possible to diagonalize them simultaneously, which yields a decomposition into the direct sum of the four subspaces $\mbox{Im}\,p\cap\mbox{Im}\,q$, $\mbox{Ker}\,p\cap\mbox{Ker}\,q$, $\mbox{Im}\,p\cap\mbox{Ker}\,q$, and $\mbox{Ker}\,p\cap\mbox{Im}\,q$.
Together with \emph{(i)} and \emph{(ii)}, this proves \emph{(iii)}.
\end{proof}

\begin{lem}[Kovarik's formula]\label{kovarik}
Let $p,q$ be idempotents in $B(H)$.
If $p+q-1$ is invertible then there exists an idempotent $r$ such that $\mbox{Im}\,r=\mbox{Im}\,p$ and $\mbox{Ker}\,r=\mbox{Ker}\,q$.
It is given by the formula
$$r=p(p+q-1)^{-2}q$$
\end{lem}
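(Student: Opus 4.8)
The plan is to argue purely algebraically, using only $p^2=p$, $q^2=q$ and the invertibility of $s=p+q-1$; self-adjointness plays no role, so the statement really concerns idempotents. First I would record the four relations obtained by multiplying $s$ by $p$ or $q$ on either side and cancelling squares:
$$ps=pq=sq\quad\mbox{and}\quad sp=qp=qs.$$
Read multiplicatively these say $s^{-1}ps=q=sps^{-1}$, i.e. $s$ conjugates $p$ to $q$.

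The crux of the argument, and the step I expect to carry all the weight, is to turn this into a commutation statement. Although neither $p$ nor $q$ commutes with $s$ in general, the relations above force
$$ps^2=(ps)s=(sq)s=s(qs)=s(sp)=s^2p,$$
and symmetrically $qs^2=s^2q$; equivalently, expanding $s^2=pq+qp-p-q+1$ one checks directly that $ps^2=pqp=s^2p$ and $qs^2=qpq=s^2q$. This single computation does double duty: it shows that $p$ and $q$ commute with $s^2$, hence with $s^{-2}$, and it records the \emph{collapse identities} $pqp=s^2p$ and $qpq=s^2q$. In particular the candidate can be rewritten as $r=p\,s^{-2}q=s^{-2}pq=pq\,s^{-2}$.

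Everything then reduces to substitution. For idempotency I would compute $r^2=s^{-4}(pq)^2=s^{-4}(pqp)q=s^{-4}s^2pq=s^{-2}pq=r$, using that $pq$ commutes with $s^{-2}$ together with the collapse identity $pqp=s^2p$. For the range and kernel I would avoid subspace arguments altogether and invoke Lemma \ref{imker}: the equality $\mbox{Im}\,r=\mbox{Im}\,p$ is equivalent to the pair of identities $pr=r$ and $rp=p$, while $\mbox{Ker}\,r=\mbox{Ker}\,q$ is equivalent to $rq=r$ and $qr=q$. Of these, $pr=r$ and $rq=r$ are immediate from $p^2=p$ and $q^2=q$, whereas $rp=s^{-2}pqp=s^{-2}s^2p=p$ and $qr=qpq\,s^{-2}=s^2q\,s^{-2}=q$ use exactly the collapse identities together with the commutation of $p,q$ with $s^{-2}$. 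Thus once the commutation with $s^2$ is established the remaining verifications are mechanical, and the whole difficulty is concentrated in that one observation.
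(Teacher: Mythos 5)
Your proof is correct and follows essentially the same route as the paper's: both establish that $p$ and $q$ commute with $(p+q-1)^2$ via the identities $p(p+q-1)^2=(p+q-1)^2p=pqp$ and $q(p+q-1)^2=(p+q-1)^2q=qpq$, use this to verify idempotency of $r$, and then appeal to Lemma~\ref{imker} for the range and kernel statements. The only difference is that you spell out the four verifications $pr=r$, $rp=p$, $rq=r$, $qr=q$ that the paper dismisses as ``easy to check.''
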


\begin{proof}
First a routine verification shows that $\omega=(p+q-1)^2$ and its inverse commute with $p$ and with $q$ as soon as $p$ and $q$ are idempotents.
More precisely, we have $p\omega=\omega p=pqp$ and $q\omega=\omega q=qpq$.
Then the formula defines an element $r$ such that $r^2=(p\omega^{-1}q)(p\omega^{-1}q)=(pqp\omega^{-1})\omega^{-1}q=p\omega^{-1}q=r$, i.e. $r$ is an idempotent.
Finally, it is easy to check that $\mbox{Im}\,r=\mbox{Im}\,p$ and $\mbox{Ker}\,r=\mbox{Ker}\,q$ thanks to the algebraic characterizations of these identities exhibited in Lemma \ref{imker}.
\end{proof}

\begin{lem}\label{conversekovarik}
Let $p,q$ be idempotents in $B(H)$.
The element $p+q-1$ is invertible if and only if there exist $r_1$ and $r_2$ idempotents such that:
\begin{enumerate}
\item  $\mbox{Im}\,r_1=\mbox{Im}\,p$ and $\mbox{Ker}\,r_1=\mbox{Ker}\,q$
\item  $\mbox{Ker}\,r_2=\mbox{Ker}\,p$ and $\mbox{Im}\,r_2=\mbox{Im}\,q$.
\end{enumerate}
\end{lem}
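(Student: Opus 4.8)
The plan is to treat the two implications separately, getting the direct implication almost for free from Kovarik's formula and proving the converse by exhibiting an \emph{explicit} inverse of $p+q-1$.

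For the direct implication, suppose $p+q-1$ is invertible. Applying Lemma \ref{kovarik} to the ordered pair $(p,q)$ produces an idempotent $r_1=p(p+q-1)^{-2}q$ with $\mbox{Im}\,r_1=\mbox{Im}\,p$ and $\mbox{Ker}\,r_1=\mbox{Ker}\,q$, which is condition (1). Since $q+p-1=p+q-1$ is again invertible, applying the same lemma to the reversed pair $(q,p)$ produces an idempotent $r_2=q(p+q-1)^{-2}p$ with $\mbox{Im}\,r_2=\mbox{Im}\,q$ and $\mbox{Ker}\,r_2=\mbox{Ker}\,p$, which is condition (2). So here I would simply invoke Lemma \ref{kovarik} twice.

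For the converse, suppose $r_1$ and $r_2$ exist. First I would translate the four range/kernel identities into algebraic relations via Lemma \ref{imker}. From $\mbox{Im}\,r_1=\mbox{Im}\,p$ one reads off $r_1p=p$ and $pr_1=r_1$, and from $\mbox{Ker}\,r_1=\mbox{Ker}\,q$ one reads off $r_1q=r_1$ and $qr_1=q$; symmetrically, $r_2q=q$, $qr_2=r_2$, $r_2p=r_2$ and $pr_2=p$. Writing $T=p+q-1$, these relations collapse after cancellation to the four clean identities $Tr_1=q$, $r_1T=p$, $Tr_2=p$ and $r_2T=q$.

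The crux is then the observation that $r_1+r_2-1$ is a two-sided inverse of $T$: indeed $T(r_1+r_2-1)=Tr_1+Tr_2-T=q+p-(p+q-1)=1$ and likewise $(r_1+r_2-1)T=r_1T+r_2T-T=p+q-(p+q-1)=1$, whence $p+q-1$ is invertible. The point I would stress is that, although invertibility on $B(H)$ a priori demands control of injectivity, surjectivity \emph{and} boundedness of the inverse, the hypothesis secretly packages exactly the data needed to write the inverse in closed form; the only genuine work is the bookkeeping with Lemma \ref{imker}. This also explains the apparent redundancy of asking for both $r_1$ and $r_2$: one idempotent handles $Tr_1=q$, $r_1T=p$, but both one-sided identities $T\,(r_1+r_2-1)=1$ and $(r_1+r_2-1)\,T=1$ only close up because the symmetric partner $r_2$ supplies the complementary pair $Tr_2=p$, $r_2T=q$.
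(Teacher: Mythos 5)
Your proof is correct and follows exactly the paper's route: the forward direction invokes Lemma \ref{kovarik} (applied to the pairs $(p,q)$ and $(q,p)$), and the converse exhibits $r_1+r_2-1$ as the inverse of $p+q-1$ via the algebraic identities of Lemma \ref{imker}, which is precisely the ``routine verification'' the paper leaves to the reader. Your computations ($Tr_1=q$, $r_1T=p$, $Tr_2=p$, $r_2T=q$, hence $T(r_1+r_2-1)=(r_1+r_2-1)T=1$) check out and simply make that verification explicit.
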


\begin{proof}
If $p+q-1$ is invertible, then the formula in Lemma \ref{kovarik} allows us to define $r_1$ and $r_2$ such that the conditions above be fulfilled.
Conversely, assume that such idempotents exist.
Then a routine verification shows that $r_1+r_2-1$ is the inverse of $p+q-1$, using identities of Lemma \ref{imker}.
\end{proof}

\begin{lem}\label{corokovarikbis}
Let $p,q$ be idempotents in $B(H)$.
\begin{enumerate}[(i)]
\item If $p+q-1$ is invertible, then there exists an idempotent-valued path connecting $p$ and $q$.
\item If $p$ and $q$ are projections and if $\|p-q\|<1$, then $p$ and $q$ are homotopic.
\end{enumerate}
\end{lem}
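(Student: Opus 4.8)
My plan is to handle the two parts in order, using Kovarik's formula (Lemma \ref{kovarik}) as the engine for (i) and then upgrading from idempotents to projections for (ii).

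For (i), since $p+q-1$ is invertible, Lemma \ref{kovarik} produces an idempotent $r=p(p+q-1)^{-2}q$ with $\mbox{Im}\,r=\mbox{Im}\,p$ and $\mbox{Ker}\,r=\mbox{Ker}\,q$. I would connect $p$ to $q$ by two straight segments pivoting through $r$: first from $p$ to $r$, then from $r$ to $q$. The decisive point is that an affine segment between two idempotents sharing a common range, or a common kernel, stays idempotent-valued. Indeed, by Lemma \ref{imker} the identity $\mbox{Im}\,p=\mbox{Im}\,r$ gives $pr=r$ and $rp=p$, whence a one-line expansion shows $((1-t)p+tr)^2=(1-t)p+tr$; symmetrically $\mbox{Ker}\,r=\mbox{Ker}\,q$ gives $rq=r$ and $qr=q$, so $(1-t)r+tq$ is idempotent for every $t$. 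Concatenating these two norm-continuous segments yields the desired piecewise affine idempotent-valued path.

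For (ii) I would first reduce to (i) through the identity $(p+q-1)^2=1-(p-q)^2$, which holds for projections by expanding both sides with $p^2=p$ and $q^2=q$. Since $p-q$ is self-adjoint with $\|p-q\|<1$, the right-hand side satisfies $1-(p-q)^2\ge(1-\|p-q\|^2)1>0$, so it is positive invertible; hence $p+q-1$ is invertible and part (i) supplies an idempotent-valued path $e_t$ from $p$ to $q$.

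The remaining and genuinely delicate step is to promote this idempotent path to a projection-valued one, since homotopy in the sense of the paper requires self-adjoint idempotents. I would invoke the standard norm-continuous retraction sending an idempotent $e$ to the orthogonal projection onto $\mbox{Im}\,e$, namely $\rho(e)=ee^*(1-(e-e^*)^2)^{-1}$. Because $e-e^*$ is skew-adjoint, $1-(e-e^*)^2=1+(e-e^*)^*(e-e^*)\ge 1$ is always invertible, so $\rho$ is defined and continuous on every idempotent, and $\rho(e)=e$ whenever $e$ is already a projection. Then $\rho(e_t)$ is a continuous projection-valued path from $\rho(p)=p$ to $\rho(q)=q$, that is, a homotopy. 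The main obstacle is verifying that $\rho(e)$ really is a self-adjoint idempotent with range $\mbox{Im}\,e$, equivalently that $ee^*$ commutes with $1-(e-e^*)^2$. Should this computation prove awkward, I would bypass it by building a unitary intertwiner directly: the element $a=pq+(1-p)(1-q)$ satisfies $aa^*=a^*a=1-(p-q)^2$, hence is invertible and normal, its polar part $u=a(1-(p-q)^2)^{-1/2}$ is a unitary with $uqu^*=p$, and writing $u=\exp(ih)$ with $h=h^*$ lets the path $u_t=\exp(ith)$ conjugate $q$ into $p$ through projections $u_tqu_t^*$. All of these constructions are algebraic and stay inside any von Neumann algebra containing $p$ and $q$, which is what is needed for the applications to $G_p(M)$.
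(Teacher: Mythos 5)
Your proposal is correct and follows essentially the same route as the paper: part (i) via the two affine segments $[p,r]$ and $[r,q]$ through the Kovarik idempotent, and part (ii) by reducing to (i) through $(p+q-1)^2=1-(p-q)^2$ and then retracting the idempotent path pointwise onto projections. Moreover, the retraction you propose, $\rho(e)=ee^*(1-(e-e^*)^2)^{-1}$, is in fact the very same operator as the paper's $g(t)=f(t)(f(t)+f(t)^*-1)^{-2}f(t)^*$, because $(e+e^*-1)^2=1-(e-e^*)^2$ and both $e$ and $e^*$ commute with $(e-e^*)^2$ (a one-line computation: $e(e-e^*)^2=e-ee^*e=(e-e^*)^2e$), which also settles the commutation fact you flagged as the remaining obstacle, so your fallback via the unitary intertwiner is not needed.
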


\begin{proof}
If $p+q-1$ is invertible, then the formula of Lemma \ref{kovarik} defines an idempotent such that both segments $[p,r]$ and $[r,q]$ are contained in the set of idempotents in $B(H)$.
This follows from straightforward computations using Lemma \ref{imker}.
Thus we have proved \emph{(i)}.
If $p$ and $q$ are projections such that $\|p-q\|<1$, it follows from the formula $(p+q-1)^2=1-(p-q)^2$ that $p+q-1$ is invertible.
Hence by \emph{(i)} we can take $f(t)$ a parametrization of an idempotent-valued path connecting $p$ and $q$.
Now since $p$ and $q$ are projections, the formula
$$g(t):=f(t)(f(t)+f(t)\sp*-1)^{-2}f(t)\sp*$$
defines, thanks to Lemma \ref{kovarik} again, a projection-valued path connecting $p$ and $q$.
Note that the invertibility of $f(t)+f(t)\sp*-1$ follows from Lemma \ref{conversekovarik}.
So $p$ and $q$ are homotopic and the proof of \emph{(ii)} is complete.
\end{proof}

\begin{lem}\label{corokovarik}
Let $p,q$ be projections in $B(H)$.
The following assertions are equivalent:
\begin{enumerate}[(i)]
\item $p+q-1$ is invertible 
\item $H=\mbox{Im}\,p\oplus\mbox{Ker}\,q$
\item $\|p-q\|<1$
\end{enumerate}
\end{lem}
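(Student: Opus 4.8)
The plan is to prove the two equivalences (i) $\Leftrightarrow$ (iii) and (i) $\Leftrightarrow$ (ii) separately, since they rest on different tools: the first is purely spectral, while the second repackages the geometric content of Kovarik's formula.

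For (i) $\Leftrightarrow$ (iii) I would first note that both $p+q-1$ and $p-q$ are self-adjoint, and record the identity $(p+q-1)^2=1-(p-q)^2$ already used in the proof of Lemma \ref{corokovarikbis}. Since $0\le p\le 1$ and $0\le q\le 1$ force $-1\le p-q\le 1$, the positive operator $(p-q)^2$ has spectrum contained in $[0,1]$. A self-adjoint operator is invertible if and only if its square is, so $p+q-1$ is invertible iff $1-(p-q)^2$ is, iff $1\notin\mbox{spec}\,(p-q)^2$; as that spectrum already sits in $[0,1]$, this last condition means exactly $\|(p-q)^2\|<1$, equivalently $\|p-q\|<1$ (using $\|(p-q)^2\|=\|p-q\|^2$). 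This settles (i) $\Leftrightarrow$ (iii) without appealing to the geometric lemmas.

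For (i) $\Rightarrow$ (ii) I would simply feed the invertibility of $p+q-1$ into Kovarik's formula (Lemma \ref{kovarik}): it produces an idempotent $r$ with $\mbox{Im}\,r=\mbox{Im}\,p$ and $\mbox{Ker}\,r=\mbox{Ker}\,q$. Every idempotent splits $H=\mbox{Im}\,r\oplus\mbox{Ker}\,r$, so $H=\mbox{Im}\,p\oplus\mbox{Ker}\,q$, which is (ii).

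The converse (ii) $\Rightarrow$ (i) is where the only real subtlety lies, and it is dictated by the shape of Lemma \ref{conversekovarik}, which demands two idempotents $r_1$ and $r_2$. The decomposition (ii) directly furnishes $r_1$ with range $\mbox{Im}\,p$ and kernel $\mbox{Ker}\,q$, but $r_2$ requires the complementary splitting $H=\mbox{Ker}\,p\oplus\mbox{Im}\,q$. Here I would use that $p$ and $q$ are genuine projections, not merely idempotents: passing to the adjoint of $r_1$ (equivalently, taking orthogonal complements and using $\mbox{Im}\,p=(\mbox{Ker}\,p)^\perp$ and $\mbox{Ker}\,q=(\mbox{Im}\,q)^\perp$) converts $H=\mbox{Im}\,p\oplus\mbox{Ker}\,q$ into $H=\mbox{Ker}\,p\oplus\mbox{Im}\,q$, which supplies $r_2$. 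Lemma \ref{conversekovarik} then yields the invertibility of $p+q-1$, closing the cycle. The main obstacle is thus not any hard estimate but correctly exploiting self-adjointness to recover the second, dual decomposition from the single one asserted in (ii).
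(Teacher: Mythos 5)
Your proposal is correct and follows essentially the same route as the paper: the equivalence (i) $\Leftrightarrow$ (iii) via the identity $(p+q-1)^2=1-(p-q)^2$ and functional calculus, (i) $\Rightarrow$ (ii) via Kovarik's formula (Lemma \ref{kovarik}), and (ii) $\Rightarrow$ (i) by building the first idempotent from the decomposition and obtaining the second as its adjoint before invoking Lemma \ref{conversekovarik}. The only difference is that you spell out the spectral details the paper compresses into the phrase ``by functional calculus.''
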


\begin{proof}
If $p+q-1$ is invertible, then Lemma \ref{kovarik} yields an idempotent $r$ such that $H=\mbox{Im}\,r\oplus\mbox{Ker}\,r=\mbox{Im}\,p\oplus\mbox{Ker}\,q$, so \emph{(i)} implies \emph{(ii)}.
Now if $H=\mbox{Im}\,p\oplus\mbox{Ker}\,q$, we can define an idempotent $r$ such that  $\mbox{Im}\,r=\mbox{Im}\,p$ and $\mbox{Ker}\,r=\mbox{Ker}\,q$.
Next we observe that $r\sp*$ is an idempotent such that $\mbox{Im}\,r\sp*=\mbox{Im}\,q$ and $\mbox{Ker}\,r\sp*=\mbox{Ker}\,p$.
Therefore, by Lemma \ref{conversekovarik}, we have that $p+q-1$ is invertible.
So \emph{(i)} and \emph{(ii)} are equivalent.
The equivalence between \emph{(i)} and \emph{(iii)} follows from the identity $(p+q-1)^2=1-(p-q)^2$ by functional calculus.
\end{proof}

\section{Affine coordinates}\label{affinecoordinates}

\subsection{The classical case $G(k,n)$}

To a family $\{v_1,\ldots,v_k\}$ of $k$ vectors in $\mathbb{C}^n$ we can associate $L_v$ the $n\times k$ matrix whose columns are the vectors $v_j$'s.
Then we note that the span of the $v_j$'s, namely the range of $L_v$, is a $k$-dimensional subspace of $\mathbb{C}^n$ if and only if the matrix $L_v$ is left-invertible.
Moreover, we see that Im $L_v$ = Im $L_w$ if and only if there exists an invertible element $\sigma$ in $GL(k)$ such that $L_w=L_v\sigma$.
This observation yields an identification between unordered bases of $k$-dimensional subspaces and left invertible $n\times k$-matrices, whose set we will denote $M_{n,k}\sp*$, mod out by $GL(k)$.
In other terms, we see $G(k,n)$, the set of $k$-dimensional subspaces in $\mathbb{C}^n$, via the identification 
$$G(k,n)\equiv M_{n,k}\sp*/GL(k).$$

Next observe that a matrix $L$ is in $M_{n,k}\sp*$ if and only if there is a subset of indices $I=\{i_1<\ldots<i_k\}$ in $\{1,\ldots,n\}$ such that the corresponding $I\times\{1,\ldots,k\}$ minor is invertible.
Moreover, in this case, all the matrices in $L \cdot GL(k)$ share this property with respect to the same minor extraction.
Let us denote $U_I$ the corresponding class in $M_{n,k}\sp*/GL(k)$.

Since for each matrix in $M_{n,k}\sp*$, there is an invertible $k\times k$ minor, we see that 
$$G(k,n)=\bigcup_{I=\{i_1<\ldots<i_k\}} U_I$$
where the union runs over all $k$-tuples in $\{1,\ldots,n\}$.

Let us now pick a $k$-dimensional subspace $L\cdot GL(k)$ in $U_I$.
One can see that there is a unique representative in this class such that the $I\times\{1,\ldots,k\}$ minor be equal to the identity matrix.
Then the remaining entries constitute an $(n-k)\times k$ matrix which is called the set of affine coordinates of the corresponding $k$-dimensional subspace.

Going back to the matrix $L_v$, note that the projection onto the span of $L_v$ is given by the matrix $q=L_v(L_v\sp*L_v)^{-1}L_v\sp*$. 
For example, consider the case where $I=\{1,\ldots,k\}$.
Then
$$
L_v=
\left(\begin{matrix}
1_k\cr
A
\end{matrix}\right)
\qquad\mbox{and}\qquad
q=
\left(\begin{matrix}
(1_k+A\sp*A)^{-1} &  (1_k+A\sp*A)^{-1}A\sp*  \cr
A(1_k+A\sp*A)^{-1} & A(1_k+A\sp*A)^{-1}A\sp*
\end{matrix}\right)
$$
where $1_k$ denotes the identity $k\times k$ matrix, and where $A$ is the $(n-k)\times k$ matrix corresponding to the affine coordinates.

Further, if we denote $p_I$ the projection onto $F_I$, the $k$-dimensional subspace generated by the vectors of the canonical basis corresponding to the indices in $I$, we have
$$
p_I=
\left(\begin{matrix}
1_k &  0_{k,n-k} \cr
0_{n-k,k} & 0_{n-k}
\end{matrix}\right)
$$
and a straightforward computation shows
$$
q(q+p_I-1)^{-2}p_I=
\left(\begin{matrix}
1 & 0\cr
A &0
\end{matrix}\right).
$$
In particular, we see that the formula of Lemma \ref{kovarik} helps recover the affine coordinates $A$ from the projection $q$.

Finally, note that the subspaces corresponding to the classes of $U_I$ are described as follows:
$$U_I \equiv  \{ F\in G(k,n) \;|\; F\cap F_I\sp\perp =\{0\} \}.$$
For dimension reasons, we see that if $F$ has dimension $k$, then $F\cap F_I\sp\perp =\{0\}$ if and only if $\mathbb{C}^n=F\oplus F_I\sp\perp$.
Given Lemma \ref{corokovarik}, it follows that $U_I$ corresponds to the open unit ball centered at $p_I$ in $P(M_n(\mathbb{C}))$.

\subsection{Generalization in a von Neumann algebra}

Let $M$ be a von Neumann algebra in $B(H)$ now. 
Instead of fixing a subset of indices $I$, we fix a projection $p$ in $P(M)$.
Then the equivalent of $G(k,n)$ becomes the connected component of $p$ in $P(M)$, which we denote $G_p(M)$.
The open unit ball 
$$U_p:=\{q\in P(M)\;|\; \|q-p\|<1\}$$
replaces $U_I$.
And the following result tells us what the affine coordinates map becomes in this setting.

\begin{thm}
Let $p$ be a projection in a von Neumann algebra $M$.
The map
$$\phi_p:q\longmapsto p(p+q-1)^{-2}q-p$$
realizes a homeomorphism from $U_p$ onto $p\sp\perp Mp$.
The inverse is given by the map
$$x\longmapsto
\left(\begin{matrix}
(p+x\sp*x)^{-1} &  (p+x\sp*x)^{-1}x\sp*  \cr
x(p+x\sp*x)^{-1} & x(p+x\sp*x)^{-1}x\sp*
\end{matrix}\right).
$$
\end{thm}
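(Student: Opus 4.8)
The plan is to work throughout in the block decomposition of $M$ determined by $p$, writing each element of $M$ as a $2\times2$ operator matrix relative to $H=\mbox{Im}\,p\oplus\mbox{Ker}\,p$, so that the diagonal corners are $pMp$ and $p\sp\perp Mp\sp\perp$ while the off-diagonal corners are $pMp\sp\perp$ and $p\sp\perp Mp$. Under this identification I first recognize the candidate inverse $\psi_p$: putting $L=p+x$ for $x\in p\sp\perp Mp$, one has $L\sp*L=p+x\sp*x$, and the matrix displayed in the statement, which I call $\psi_p(x)$, is precisely $L(L\sp*L)^{-1}L\sp*$, the orthogonal projection onto $\mbox{Im}\,L=\{\xi+x\xi\,:\,\xi\in\mbox{Im}\,p\}$, the graph of $x$. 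Before anything else I would record that $x\sp*x$ lies in the corner $pMp$ and is positive, so that $p+x\sp*x\geq p$ is invertible in the unital algebra $pMp$; hence $(p+x\sp*x)^{-1}$ is a well-defined element of $pMp\subseteq M$ and the formula genuinely produces an element $\psi_p(x)$ of $M$. From the shape $L(L\sp*L)^{-1}L\sp*$ it is then immediate that $\psi_p(x)$ is a self-adjoint idempotent, that is, a projection in $P(M)$.

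Next I would verify that the two maps land where claimed. For $\psi_p$, Lemma \ref{corokovarik} reduces membership of a projection $q$ in $U_p$ to the transversality condition $H=\mbox{Im}\,q\oplus\mbox{Ker}\,p$; since $\mbox{Im}\,L$ is the graph of $x$ and $\mbox{Ker}\,p=\mbox{Im}\,p\sp\perp$ is the vertical subspace, this direct sum decomposition is transparent, so $\psi_p(x)$ indeed belongs to $U_p$. For $\phi_p$, I would fix $q\in U_p$; by Lemma \ref{corokovarik} the element $p+q-1$ is invertible, so Kovarik's formula (Lemma \ref{kovarik}) produces an idempotent with prescribed range $\mbox{Im}\,p$ and kernel $\mbox{Ker}\,q$. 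Reading off its block form from these range and kernel data, exactly as in the classical computation of the previous subsection, shows that subtracting $p$ leaves an element supported in a single off-diagonal corner, so $\phi_p(q)$ lies in the required corner. Here I will have to be attentive to which off-diagonal corner appears, and to any adjoint, in order to match the statement.

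The crux is that $\phi_p$ and $\psi_p$ are mutually inverse, and I would establish this geometrically rather than by grinding out the matrix products. The governing principle is uniqueness: an orthogonal projection is determined by its range. Given $q\in U_p$, the decomposition $H=\mbox{Im}\,q\oplus\mbox{Ker}\,p$ exhibits $\mbox{Im}\,q$ as the graph of a unique bounded operator $\mbox{Im}\,p\to\mbox{Im}\,p\sp\perp$; the content of Kovarik's formula is exactly that $\phi_p$ reads off the slope of this graph, while $\psi_p$ rebuilds the orthogonal projection onto the very same subspace $\mbox{Im}\,q$, so that $\psi_p(\phi_p(q))=q$. In the reverse direction, $\mbox{Im}\,\psi_p(x)$ is by construction the graph of $x$, so $\phi_p$ returns its slope, namely $x$ once the adjoint convention of the statement is matched. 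In this way the whole identity reduces to the bookkeeping of ranges and kernels already packaged in Lemmas \ref{imker} and \ref{kovarik}.

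Finally, continuity in both directions is routine. The map $\phi_p$ is continuous on $U_p$ because $q\mapsto p+q-1$ takes invertible values throughout $U_p$ by Lemma \ref{corokovarik}, and inversion together with multiplication are norm-continuous; likewise $\psi_p$ is continuous because $x\mapsto p+x\sp*x$ is continuous into the invertibles of the corner $pMp$, each such value being $\geq p$, and inversion and multiplication are continuous there. Two mutually inverse continuous maps are homeomorphisms, which finishes the argument. I expect the genuine obstacle to be the mutual-inversion step, where one must keep the graph/slope identification and the corner bookkeeping perfectly consistent; by contrast well-definedness and continuity are straightforward.
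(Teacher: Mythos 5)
Your proposal is correct in substance but follows a genuinely different route from the paper. The paper gives essentially no self-contained argument: it cites Theorem 1.2 of \cite{giol07} (the parametrization of the idempotents $q$ with $p+q-1$ invertible by the tangent space $T_p=pMp\sp\perp\oplus p\sp\perp Mp$) and merely verifies what restriction to self-adjoint elements does: by Lemma \ref{corokovarik} the domain becomes the unit ball $U_p$, and the constraint set $\Omega_p=\{h \,:\, 2p-1+h \mbox{ invertible}\}$ becomes the entire self-adjoint part of $T_p$ because $(2p-1+h)^2$ is block diagonal with positive invertible entries; that self-adjoint part is then identified with $p\sp\perp Mp$. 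You instead reconstruct the content of that citation from the paper's Section \ref{preliminaries}: you recognize the claimed inverse as $L(L\sp*L)^{-1}L\sp*$ with $L=p+x$ (with inversion taken in the corner algebra $pMp$ -- a point the statement leaves implicit and you rightly make explicit), i.e.\ the orthogonal projection onto the graph of $x$; you get membership in $U_p$ from Lemma \ref{corokovarik} via transversality of a graph with the vertical subspace (after swapping the roles of $p$ and $q$ in that lemma, which is harmless since its condition \emph{(iii)} is symmetric); and you get mutual inversion from Kovarik's formula plus the principle that a projection is determined by its range. This buys self-containedness and a transparent geometric picture, at the cost of redoing work the author outsources.

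The one thing you should not leave as deferred ``bookkeeping'' is the adjoint/corner issue you flag twice: it is a genuine discrepancy in the statement, not a convention to be matched silently. Write $\mbox{Im}\,q$ as the graph of $x\in p\sp\perp Mp$. By Lemma \ref{kovarik}, $p(p+q-1)^{-2}q$ is the idempotent with range $\mbox{Im}\,p$ and kernel $\mbox{Ker}\,q=\mbox{graph}(-x\sp*)$, whose block form is $\left(\begin{smallmatrix}1 & x\sp* \\ 0 & 0\end{smallmatrix}\right)$; hence the printed map satisfies $\phi_p(q)=x\sp*\in pMp\sp\perp$, the \emph{opposite} corner, and $\phi_p(\psi_p(x))=x\sp*$ rather than $x$. (The paper's own $M_2(\mathbb{C})$ example confirms this: there $(p+q-1)^{-2}=(1+|z|^2)1$, so $\phi_p(q)=\left(\begin{smallmatrix}0 & \bar{z} \\ 0 & 0\end{smallmatrix}\right)$.) The repair that makes your graph/slope argument literally correct, and which matches the paper's classical computation $q(q+p_I-1)^{-2}p_I=\left(\begin{smallmatrix}1 & 0 \\ A & 0\end{smallmatrix}\right)$ in Section \ref{affinecoordinates}, is to take $\phi_p(q)=q(q+p-1)^{-2}p-p$, Kovarik's idempotent with range $\mbox{Im}\,q$ and kernel $\mbox{Ker}\,p$, which is exactly the slope your argument wants; alternatively, keep the printed formula and compose with the adjoint homeomorphism $pMp\sp\perp\to p\sp\perp Mp$. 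With that correction made explicit, your inversion and continuity arguments do constitute a complete proof.
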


\begin{proof}
It suffices to consider Theorem 1.2 in \cite{giol07} for idempotents and to restrict both maps to the self-adjoint part of their domains.
Is is clear from Lemma \ref{corokovarik} that the open set $U_p=\{q | p+q-1 \mbox{ invertible}\}$ in \cite{giol07} becomes the open unit ball centered at $p$.
Now the range of $\phi_p$ in \cite{giol07} is $\Omega_p=\{h | 2p-1+h \mbox{ invertible}\}$ in the tangent space $T_p=pMp\sp\perp\oplus p\sp\perp Mp$.
Writing 
$$h=
\left(\begin{matrix}
0 &  x\sp*  \cr
x & 0
\end{matrix}\right)
\qquad \mbox{ with } x\in p\sp\perp Mp,$$
we see that 
$$(2p-1+h)^2=
\left(\begin{matrix}
p &  x\sp*  \cr
x & -p\sp\perp
\end{matrix}\right)^2
=
\left(\begin{matrix}
p+x\sp*x &  0  \cr
0 & p\sp\perp+xx\sp*
\end{matrix}\right)
$$
is always invertible when $h$ is self-adjoint.
Hence $\Omega_p$ becomes the whole self-adjoint part of the tangent space $T_p$, which can clearly be identified with $p\sp\perp Mp$.
After this identification, the map $\phi_p$ of \cite{giol07} reads $\phi_p(q)=p(p+q-1)^{-2}q-p$ on every projection $q$ in $U_p$.
Also, the rational parametrization takes the form announced above.
\end{proof}

For instance, in the case of $M=M_2(\mathbb{C})$ and for
$$p=
\left(\begin{matrix}
1 &  0  \cr
0 & 0
\end{matrix}\right)
$$
the parametrization above yields
$$z\longmapsto 
\left(\begin{matrix}
\frac{1}{1+|z|^2} &  \frac{\bar{z}}{1+|z|^2}  \cr
\frac{z}{1+|z|^2} & \frac{|z|^2}{1+|z|^2}
\end{matrix}\right).
$$
  

\section{Finite case}\label{finitecase}

\begin{prop}\label{genericcase}
Let $p,q$ be projections in $P(M)$.
If $p+q-1$ is injective, then $U_p\cap U_q \neq\emptyset$.
\end{prop}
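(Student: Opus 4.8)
The plan is to exhibit an explicit projection lying in both balls, the ``bisector''
$$r=\tfrac12\bigl(1+u\bigr),\qquad u=\mbox{sgn}(p+q-1),$$
where $u$ is the sign of the self-adjoint operator $s:=p+q-1$. The motivation comes from the picture of $G(k,n)$: if $p$ and $q$ are two lines meeting at an angle $\theta$, then $s$ is, up to scale, the reflection exchanging them, its sign $u$ is exactly that reflection, and $r$ projects onto the bisecting line, sitting at angle $\theta/2$ from each, hence within distance $\sin(\theta/2)\le 1/\sqrt2<1$ of both. The role of the hypothesis is that injectivity of $s$ is precisely what makes this construction legitimate in infinite dimensions.

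I would first flag the main obstacle: \emph{injective does not imply invertible}, so one cannot apply Lemma \ref{corokovarik} to the pair $p,q$ directly (indeed $\|p-q\|$ may well equal $1$). What injectivity buys us, via the computation of $\mbox{Ker}(p+q-1)$ in Lemma \ref{kerpq}, is that the spectral projection of the self-adjoint $s$ at $0$ vanishes. Consequently $u=\mbox{sgn}(s)$, defined by Borel functional calculus and hence lying in $M$, is a genuine self-adjoint unitary ($u^*=u$ and $u^2=1$, the value of $\mbox{sgn}$ at $0$ being immaterial), so $r=\tfrac12(1+u)$ is a projection in $P(M)$.

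Next I would extract the algebraic relations linking $u$ to $p$ and $q$. A direct computation gives $sp=qs$ and $ps=sq$, whence $s^2$ — and therefore $c:=|s|=(s^2)^{1/2}$ — commutes with $p$. Writing $s=uc=cu$ and using that $c$ has dense range (injectivity again), the identity $sp=qs$ upgrades to $up=qu$, hence $upu=q$ and, by taking adjoints, $pu=uq$. Feeding $up=qu$ into $su=c$ then yields $pu+up=c+u$, that is
$$\{u,2p-1\}=2c.$$

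Finally I would convert this into the norm estimate. Since $r-p=\tfrac12\bigl(u-(2p-1)\bigr)$ is self-adjoint and both $u$ and $2p-1$ are symmetries,
$$\|r-p\|^2=\tfrac14\,\bigl\|\,2-\{u,2p-1\}\,\bigr\|=\tfrac12\,\|1-c\|.$$
Because $c=|p+q-1|$ satisfies $0\le c\le 1$ (from $(p+q-1)^2=1-(p-q)^2\le 1$), we get $\|1-c\|\le 1$ and therefore $\|r-p\|\le 1/\sqrt2<1$, so $r\in U_p$. As $u$ commutes with $e:=r$ we have $ueu=e$, which together with $upu=q$ gives $\|r-q\|=\|u(r-p)u\|=\|r-p\|$, so $r\in U_q$ as well and the intersection is nonempty. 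I expect the only delicate point to be the passage from $sp=qs$ to $up=qu$, where the factor $c$ must be cancelled using its dense range rather than an inverse — exactly the place where mere injectivity, as opposed to invertibility, is all we are entitled to use.
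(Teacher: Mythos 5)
Your proof is correct, and it in fact produces the very same projection as the paper's, but by a genuinely different and more intrinsic route. The paper first splits $H$ via Lemma \ref{kerpq} into $\mbox{Im}\,p\cap\mbox{Im}\,q$, $\mbox{Ker}\,p\cap\mbox{Ker}\,q$, and the generic part $\mbox{Ker}\,(pq-qp)\sp\perp$, reduces to the generic case, and then invokes Halmos's two-subspace theorem to put $p,q$ in $2\times 2$ operator-matrix form; in those coordinates it exhibits $r$ explicitly together with an involution $\tau$ satisfying $\tau p\tau=q$ and $\tau r=r\tau=r$. Since in Halmos coordinates $p+q-1=|p+q-1|\,\tau$, the paper's $\tau$ is precisely your $u=\mbox{sgn}(p+q-1)$, its $r$ is your $\tfrac12(1+u)$, and your anticommutator identity $\{u,2p-1\}=2c$ (with $c=|p+q-1|$) together with the consequence $(r-p)^2=\tfrac12(1-c)$ is the coordinate-free form of the paper's matrix computation of $(p-r)^2$. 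What your route buys: it dispenses with the preliminary space decomposition and with Halmos's theorem altogether --- the paper needs a side remark to justify that Halmos's construction stays inside $M$, whereas membership of your $u$, hence of $r$, in $M$ is automatic from Borel functional calculus --- and it isolates exactly the one point where injectivity is genuinely used, namely cancelling the positive injective $c$ by density of its range to upgrade $sp=qs$ to $up=qu$; you flag and handle that point correctly. What the paper's route buys in exchange: explicit matrix formulas and the classical two-subspace (angle-operator) picture, which makes the constant $1/\sqrt{2}$ geometrically evident at a glance.
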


\begin{proof}
Since $p+q-1$ is injective, we deduce from Lemma \ref{kerpq} the decomposition 
$$H=\mbox{Im}\,p\cap \mbox{Im}\,q\oplus\mbox{Ker}\,p\cap\mbox{Ker}\,q\oplus \mbox{Ker}\,(pq-qp)\sp\perp.$$
This is stable under $p$ and $q$, hence it suffices to establish the result for each of the three possible restrictions. 
The first two cases are trivial, so we can assume without loss of generality that $H=\mbox{Ker}\,(pq-qp)\sp\perp$, i.e. $\mbox{Ker}\,(pq-qp)=\{0\}$.
Then by a result of Halmos \cite{halmos69}, we can further assume that $p$ and $q$ are written as follows:
$$
p=
\left(\begin{matrix}
1 &  0  \cr
0 & 0
\end{matrix}\right)
\qquad\mbox{and}\qquad 
q=
\left(\begin{matrix}
c^2 &  cs  \cr
cs & s^2
\end{matrix}\right)
$$
where $c,s$ are positive injective contractions such that $c^2+s^2=1$.
Note that this decomposition remains in $M$, since the only tools used by Halmos in his proof are the polar decomposition and some continuous functional calculus.
Then it suffices to set 
$$r:=
\left(\begin{matrix}
\frac{1+c}{2} &  \frac{s}{2}  \cr
\frac{s}{2} & \frac{1-c}{2}
\end{matrix}\right)
$$
to obtain the desired projection in $U_p\cap U_q$.
Indeed, it is easy to check that this formula defines a projection.
Then set
$$
\tau:=
\left(\begin{matrix}
c &  s  \cr
s & -c
\end{matrix}\right).
$$
Some straightforward computations show that $\tau$ is an involution, i.e. $\tau=\tau\sp*=\tau^{-1}$, and that we have $\tau p \tau =q$ and $\tau r=r\tau=r$.
Hence $p+r-1$ is invertible, i.e. $r\in U_p$, if and only $\tau(p+r-1)\tau=q+r-1$ is invertible, i.e. $r\in U_q$.
Finally, we can easily compute
$$
(p-r)^2=
\left(\begin{matrix}
\frac{1-c}{2} &  0 \cr
0 & \frac{1-c}{2}
\end{matrix}\right).
$$
This shows that $\|p-r\|\leq 1/\sqrt{2}$, so $r\in U_p$, and the proof is complete.
\end{proof}

\begin{prop}\label{pperpcase}
Let $p$ be a projection in $P(M)$.
If $p$ and $p\sp\perp$ are homotopic, then $U_p\cap U_{p\sp\perp} \neq\emptyset$.
\end{prop}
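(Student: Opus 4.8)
The plan is to mimic the symmetry trick used in Proposition \ref{genericcase}: I will produce a self-adjoint unitary $w$ in $M$ that conjugates $p$ into $p\sp\perp$ while fixing a suitable candidate projection $r$, so that the membership $r\in U_p$ transfers automatically to $r\in U_{p\sp\perp}$.

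First I would exploit the hypothesis. Since $p$ and $p\sp\perp$ are homotopic, they are unitarily equivalent, so there is a unitary $u$ in $M$ with $p\sp\perp=upu\sp*$. Setting $v=up$ gives a partial isometry in $M$ with $v\sp*v=p$ and $vv\sp*=p\sp\perp$; in particular $v=p\sp\perp v\,p$, whence $v^2=0$ because $p\,p\sp\perp=0$, and likewise $v\sp*p=0$. The element $w=v+v\sp*$ then satisfies $w=w\sp*$, and using $v^2=(v\sp*)^2=0$ together with $vv\sp*+v\sp*v=p\sp\perp+p=1$ one checks $w^2=1$; thus $w$ is a symmetry in $M$. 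A short computation with $wp=v$ (since $v\sp*p=0$) yields $wpw=p\sp\perp$, so $w$ plays exactly the role of the involution $\tau$ of Proposition \ref{genericcase}.

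Next I would take the midpoint projection $r:=\tfrac12(1+w)$. Because $w^2=1$ this is a projection, it lies in $M$, and it is fixed by $w$ in the sense that $wr=rw=r$. Consequently $w(p+r-1)w=p\sp\perp+r-1$, so $p+r-1$ is invertible if and only if $p\sp\perp+r-1$ is; by Lemma \ref{corokovarik} this means precisely that $r\in U_p$ if and only if $r\in U_{p\sp\perp}$. It therefore suffices to show $r\in U_p$. Writing the operators in $2\times2$ block form relative to $H=\mbox{Im}\,p\oplus\mbox{Ker}\,p$, where $v$ is an off-diagonal unitary between the two summands, a direct squaring gives $(p+r-1)^2=\tfrac12\cdot 1$. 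Equivalently, via the identity $(p+r-1)^2=1-(p-r)^2$, one obtains $\|p-r\|=1/\sqrt2<1$, so $r\in U_p$ and hence $r\in U_p\cap U_{p\sp\perp}$.

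The main obstacle is the construction and verification of the symmetry $w$: the whole argument hinges on $v^2=0$, which is what forces $w^2=1$ and makes $\tfrac12(1+w)$ a genuine projection. Once $w$ is in hand the reasoning is parallel to Proposition \ref{genericcase}, the remaining steps being the routine verifications that $wpw=p\sp\perp$, that $r$ is $w$-fixed, and the final norm computation.
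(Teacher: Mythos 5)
Your proof is correct and takes essentially the same route as the paper: both arguments construct a partial isometry $v$ between $p$ and $p\sp\perp$ satisfying $v^2=(v\sp*)^2=0$, take the same midpoint projection $r=\tfrac12(1+v+v\sp*)$, and conclude with the computation $\|p-r\|=1/\sqrt{2}<1$. The only cosmetic difference is that you transfer $r\in U_p$ to $r\in U_{p\sp\perp}$ via conjugation by the symmetry $w=v+v\sp*$, whereas the paper simply computes $(r-p\sp\perp)^2=1/2$ directly.
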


\begin{proof}
Take $u$ a partial isometry in $M$ such that $p=uu\sp*$ and $p\sp\perp=u\sp*u$.
Without loss of generality, we can assume that $u=pup\sp\perp$, so that
$$u^2=(u\sp*)^2=up=p\sp\perp u=pu\sp*=u\sp*p\sp\perp=0.$$
Now if we put 
$$r:=\frac{1+u+u\sp*}{2},$$ 
some simple computations show that we obtain a projection such that
$$(r-p)^2=\frac{(p\sp\perp-p+u+u\sp*)^2}{4}=\frac{1}{2}.$$
Likewise we can check that $(r-p\sp\perp)^2=1/2$, hence $\|r-p\|=\|r-p\sp\perp\|=1/\sqrt{2}$, and the result follows.
\end{proof}

\begin{thm}
Let $M$ be a finite von Neumann algebra.
For any two homotopic projections $p,q$ we have $U_p\cap U_q \neq\emptyset$.
\end{thm}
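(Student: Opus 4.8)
The plan is to split $H$ along the kernel of $p+q-1$ and to feed the two resulting pieces into Propositions \ref{genericcase} and \ref{pperpcase}. Since $p$ and $q$ are homotopic they are unitarily equivalent, hence Murray--von Neumann equivalent: $p\sim q$. By Lemma \ref{kerpq} \emph{(i)},
$$\mbox{Ker}\,(p+q-1)=(\mbox{Im}\,p\cap\mbox{Ker}\,q)\oplus(\mbox{Ker}\,p\cap\mbox{Im}\,q).$$
Let $e_1,e_2\in M$ be the projections onto these two subspaces and set $f:=e_1+e_2$. From $e_1\leq p$, $e_1\leq q\sp\perp$, $e_2\leq p\sp\perp$ and $e_2\leq q$ it follows that $p$ and $q$ commute with $e_1$, with $e_2$, and hence with $f$. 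Thus $f$ and $f\sp\perp$ reduce both $p$ and $q$, and I may work in the corners $fMf$ and $f\sp\perp Mf\sp\perp$ separately: writing $\hat p=f\sp\perp p$ and $\hat q=f\sp\perp q$, if $r_1\in U_{\hat p}\cap U_{\hat q}$ inside $f\sp\perp Mf\sp\perp$ and $r_2\in U_{e_1}\cap U_{e_2}$ inside $fMf$, then $r:=r_1+r_2$ satisfies $\|r-p\|=\max(\|r_1-\hat p\|,\|r_2-e_1\|)<1$ and likewise $\|r-q\|<1$, so $r\in U_p\cap U_q$.

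On $f\sp\perp Mf\sp\perp$ the kernel of $p+q-1$ has been removed, so $\hat p+\hat q-f\sp\perp$ is injective; applying Proposition \ref{genericcase} inside this corner produces $r_1$. On $fMf$ we recover the configuration of Proposition \ref{pperpcase}: on $\mbox{Im}\,e_1$ we have $p=1,q=0$ while on $\mbox{Im}\,e_2$ we have $p=0,q=1$, so that $fpf=e_1$ and $fqf=e_2=f-e_1$, the complement of $e_1$ within $fMf$. Hence, provided $e_1$ and $e_2$ are homotopic in $fMf$, Proposition \ref{pperpcase} delivers $r_2$ and the proof is complete.

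Everything thus comes down to the equivalence $e_1\sim e_2$, that is $p\wedge q\sp\perp\sim p\sp\perp\wedge q$, and this is the single place where finiteness is used. To establish it I would invoke the center-valued trace $T$ of the finite algebra $M$, which is additive on orthogonal projections, invariant under unitary conjugation, and satisfies $e\sim e'$ if and only if $T(e)=T(e')$. Decomposing orthogonally $p=(p\wedge q)+e_1+p_0$ and $q=(p\wedge q)+e_2+q_0$, where $p_0,q_0$ are the parts of $p,q$ carried by $\mbox{Ker}\,(pq-qp)\sp\perp$, the relation $p\sim q$ gives $T(e_1)+T(p_0)=T(e_2)+T(q_0)$. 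On $\mbox{Ker}\,(pq-qp)\sp\perp$ the projections $p_0,q_0$ are in generic position, so Halmos's theorem \cite{halmos69}, exactly as in the proof of Proposition \ref{genericcase}, yields a unitary $\tau$ with $\tau p_0\tau\sp*=q_0$; invariance of $T$ then forces $T(p_0)=T(q_0)$, whence $T(e_1)=T(e_2)$ and $e_1\sim e_2$. Since $e_1\perp e_2$ and $e_1+e_2=f$, any partial isometry $v$ realizing this equivalence gives a self-adjoint unitary $v+v\sp*$ in $fMf$ conjugating $e_1$ to $e_2$; thus $e_1,e_2$ are unitarily equivalent, hence homotopic, in $fMf$.

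I expect the equivalence $e_1\sim e_2$ to be the main obstacle. The geometric reduction to Propositions \ref{genericcase} and \ref{pperpcase} is purely formal and holds in any von Neumann algebra; but without finiteness the corner projections $p\wedge q\sp\perp$ and $p\sp\perp\wedge q$ need not be equivalent---already for $p=1$ and $q\sim 1$ a proper subprojection---and then $U_p\cap U_q$ can be empty.
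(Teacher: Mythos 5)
Your proof is correct, and its geometric skeleton is exactly the paper's: split $H$ along $\mbox{Ker}\,(p+q-1)$, which is invariant under $p$ and $q$; apply Proposition \ref{genericcase} in the corner where the sum minus the identity is injective; apply Proposition \ref{pperpcase} in the kernel corner, where the restrictions of $p$ and $q$ become complementary projections; and add the two resulting projections, the norms combining as a maximum because the corners are orthogonal. The genuine divergence is in how finiteness enters, i.e.\ in the proof that $e_1=p\wedge q\sp\perp$ and $e_2=p\sp\perp\wedge q$ are equivalent. The paper first observes that the projection $r_1$ produced by Proposition \ref{genericcase} forces, via Lemma \ref{corokovarikbis}, the restrictions of $p$ and $q$ to the injective corner to be homotopic, hence equivalent, and then cancels them from $p\sim q$ by citing the cancellation property of finite von Neumann algebras \cite[III.1.3.8]{blackadarems}; it then upgrades $e_1\sim e_2$ to an equivalence realized inside the kernel corner, just as you do with the symmetry $v+v\sp*$. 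You instead run the comparison through the center-valued trace $T$: the Halmos symmetry $\tau$ (already present in the proof of Proposition \ref{genericcase}) conjugates the generic parts $p_0$ and $q_0$, giving $T(p_0)=T(q_0)$, and subtracting this from $T(p)=T(q)$ yields $T(e_1)=T(e_2)$, hence $e_1\sim e_2$ by the trace characterization of equivalence in finite algebras. Both routes package finiteness into a standard fact of comparable depth (cancellation versus the comparison theorem for the center-valued trace); yours is slightly more economical in that the equivalence of the generic parts comes directly from Halmos rather than from $r_1$ plus Lemma \ref{corokovarikbis}, while the paper's avoids the trace machinery altogether. Two points you leave implicit, both harmless: the symmetry $\tau$ lives in the corner $h_0Mh_0$, where $h_0$ is the projection onto $\mbox{Ker}\,(pq-qp)\sp\perp$, so before invoking unitary invariance of $T$ one should extend it to the unitary $\tau+h_0\sp\perp$ of $M$ (or note that $\tau p_0$ is a partial isometry realizing $p_0\sim q_0$ in $M$); and the degenerate cases $f=0$ and $f=1$ should be mentioned, though each is covered by a single one of the two propositions.
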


\begin{proof}
Let $\pi$ be the projection onto $\mbox{Ker}\,(p+q-1)$.
By Lemma \ref{kerpq}, we know that $\pi$ is the projection onto $\mbox{Im}\,p\cap\mbox{Ker}\,q\oplus\mbox{Ker}\,p\cap\mbox{Im}\,q$.
Clearly $\pi$ belongs to the bicommutant $\{p,q\}''$ so that $\pi$ belongs to $P(M)$.
Note that $\pi$ also belongs to the commutant $\{p,q\}'$.\\
Let $p_0$ and $q_0$ denote $\pi p$ and $\pi q$, restrictions to the subspace $\mbox{Ker}\,(p+q-1)$.
They lie in the von Neumann algebra $\pi M\pi$.
Moreover, we have $p_0+q_0-\pi=0$, i.e. $q_0=p_0\sp\perp$.\\
Likewise, if we let $p_1$ and $q_1$ denote $\pi\sp\perp p$ and $\pi\sp\perp q$, we obtain two projections in the von Neumann algebra $\pi\sp\perp M\pi\sp\perp$.
By construction, we see that $p_1+q_1-\pi\sp\perp$ is now injective.\\
By Proposition \ref{genericcase}, we can find a projection $r_1$ in $P(\pi\sp\perp M\pi\sp\perp)$ such that $\|r_1-p_1\|<1$ and $\|r_1-q_1\|<1$.
In particular, by Lemma \ref{corokovarikbis} we see that $p_1$ and $q_1$ are homotopic in $\pi\sp\perp M\pi\sp\perp$.
By finiteness \cite[\rm{III}.1.3.8]{blackadarems}, it follows that $p_0$ and $q_0$ are equivalent in $M$.
Since the latter is a finite von Neumann algebra, it follows that $p_0$ and $q_0$ are homotopic in $M$.
In particular, there exists $u$ in $M$ such that $uu\sp*=p_0$ and $u\sp*u=q_0$.
Now without loss of generality, we can assume that $u=q_0up_0$.
So $u$ belongs to $\pi M\pi $, so that $p_0$ and $q_0$ are homotopic in $\pi M\pi $.\\
By Proposition \ref{pperpcase}, we can find a projection $r_0$ in $\pi M\pi $ such that $\|r_0-p_0\|<1$ and $\|r_0-q_0\|<1$.\\
Finally, it suffices to put $r:=r_0 + r_1$ to obtain the desired projection in $U_p\cap U_q$.
\end{proof}

\section{Infinite case}\label{infinitecase}

\begin{prop}\label{constructpq}
Let $M$ be a properly infinite von Neumann algebra.
Then there exist two distinct homotopic projections $p,q$ in $P(M)$ such that $p\leq q$.
\end{prop}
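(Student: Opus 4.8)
The plan is to reduce the homotopy requirement to two Murray--von Neumann equivalences and then to build $p$ and $q$ by hand from a decomposition of the unit furnished by proper infiniteness. Recall from the introduction that homotopy coincides with unitary equivalence in a von Neumann algebra, and observe that two projections $p,q$ are unitarily equivalent as soon as $p\sim q$ and $p\sp\perp\sim q\sp\perp$: if $v\sp*v=p$, $vv\sp*=q$, $w\sp*w=p\sp\perp$, and $ww\sp*=q\sp\perp$, then $u=v+w$ is a unitary with $upu\sp*=q$ (one checks $up=v$ and $vw\sp*=0$). Thus it suffices to produce projections $p\leq q$ with $p\neq q$, $p\sim q$, and $p\sp\perp\sim q\sp\perp$.

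The structural input I would invoke is the standard fact that a properly infinite von Neumann algebra admits a sequence $(e_n)_{n\geq 1}$ of mutually orthogonal projections with $\sum_{n\geq 1}e_n=1$ and $e_n\sim 1$ for every $n$ (see for instance \cite{blackadarems}). From this I extract an absorption principle: for every infinite subset $A\subseteq\mathbb{N}$ one has $\sum_{n\in A}e_n\sim 1$. Indeed, enumerating $A=\{a_1<a_2<\cdots\}$ and choosing partial isometries $w_k$ with $w_k\sp*w_k=e_k$ and $w_kw_k\sp*=e_{a_k}$ (possible since $e_k\sim e_{a_k}\sim 1$), the strongly convergent sum $w=\sum_k w_k$ has orthogonal initial and final projections and satisfies $w\sp*w=\sum_k e_k=1$ and $ww\sp*=\sum_{n\in A}e_n$.

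With this in hand the construction is immediate. I would partition $\mathbb{N}$ into three infinite subsets $A_1,A_2,A_3$ and set $E_i=\sum_{n\in A_i}e_n$, so that $E_1+E_2+E_3=1$ and each partial sum over an infinite index set is equivalent to $1$ by the absorption principle. Then I put
$$p=E_1\qquad\mbox{and}\qquad q=E_1+E_2.$$
Clearly $p\leq q$ and $p\neq q$ since $E_2\neq 0$. Moreover $p=E_1\sim 1$ and $q=\sum_{n\in A_1\cup A_2}e_n\sim 1$, whence $p\sim q$; likewise $p\sp\perp=\sum_{n\in A_2\cup A_3}e_n\sim 1$ and $q\sp\perp=E_3\sim 1$, whence $p\sp\perp\sim q\sp\perp$. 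By the reduction of the first paragraph, $p$ and $q$ are unitarily equivalent, hence homotopic, which is exactly the desired conclusion.

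The only non-elementary ingredient, and thus the main obstacle, is the decomposition of $1$ into infinitely many mutually orthogonal copies of itself; everything else is bookkeeping with partial isometries. This is precisely where proper infiniteness, rather than mere infiniteness, is used, since it guarantees that $1$ can be halved, and iterated halving together with comparison theory of projections yields the sequence $(e_n)$. I would either cite this structural fact directly or, if a self-contained account is preferred, derive it from the halving lemma for properly infinite projections.
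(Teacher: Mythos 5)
Your proof is correct and follows essentially the same route as the paper: both decompose the unit into infinitely many mutually orthogonal projections equivalent to $1$ (via proper infiniteness), group them into three infinite blocks, take $p$ to be the first block and $q$ the first two, and deduce unitary equivalence from $p\sim q$ and $p\sp\perp\sim q\sp\perp$. The only difference is cosmetic: where the paper cites additivity of Murray--von Neumann equivalence \cite{blackadarems} and Proposition 2.2 of \cite{rordamlms}, you prove these two facts inline with explicit partial isometries.
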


\begin{proof}
By \cite[\rm{III}.1.3.3]{blackadarems}, $M$ contains a unital copy of $O_\infty$.
So there is a sequence $(p_n)$ of projections in $P(M)$ such that 
$$1=\sum_{n\geq 0} p_n\qquad\mbox{with}\quad p_n\sim 1 \quad\forall n.$$
Then we can consider the three projections
$$ e_j:=\sum_{n\geq 0} p_{3n+j}\qquad j=0,1,2.$$
By additivity of the Murray-von Neumann equivalence relation \cite[\rm{III}.1.1.2]{blackadarems}, we see that 
$$e_0\sim e_1\sim e_2\sim e_0+e_1\sim e_1+e_2.$$
Now set for instance $p:=e_0$ and $q:=e_0+e_1$.
We have 
$$p\sim q\sim p\sp\perp \sim q\sp\perp.$$
It follows from Proposition 2.2 in \cite{rordamlms} that $p$ and $q$ are unitarily equivalent, hence homotopic.
The fact that they are distinct and that $p\leq q$ is obvious.
\end{proof}

\begin{thm}
Let $M$ be an infinite von Neumann algebra.
There exist two homotopic projections $p,q$ in $P(M)$ such that $U_p\cap U_q=\emptyset$.
\end{thm}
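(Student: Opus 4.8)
The plan is to produce the required pair by isolating a properly infinite part of $M$ and then proving a disjointness principle that, somewhat surprisingly, makes no reference to finiteness at all: \emph{whenever $p\leq q$ and $p\neq q$, the balls $U_p$ and $U_q$ are already disjoint}. The hypothesis that $M$ is infinite enters only to guarantee, through Proposition \ref{constructpq}, that such a nested pair can in addition be chosen \emph{homotopic} — which is exactly the feature the finite case (Theorem in Section \ref{finitecase}) forbids.

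First I would reduce to the properly infinite situation. Since $M$ is infinite, its central decomposition into a finite and a properly infinite summand yields a nonzero central projection $z$ such that $zM$ is properly infinite. Applying Proposition \ref{constructpq} to $zM$ produces homotopic projections $p,q\in P(zM)\subseteq P(M)$ with $p\leq q$ and $p\neq q$. A projection-valued path joining $p$ and $q$ inside $zM$ is a fortiori such a path inside $M$, so $p$ and $q$ are still homotopic when regarded in the full algebra $M$.

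It then remains to establish the disjointness principle for this pair. Suppose, for contradiction, that some projection $r$ lies in $U_p\cap U_q$. By Lemma \ref{corokovarik}, both $p+r-1$ and $q+r-1$ are invertible, so Kovarik's formula (Lemma \ref{kovarik}) furnishes idempotents
$$r_1=p(p+r-1)^{-2}r\qquad\mbox{and}\qquad r_2=q(q+r-1)^{-2}r$$
in $M$ with $\mbox{Im}\,r_1=\mbox{Im}\,p$, $\mbox{Im}\,r_2=\mbox{Im}\,q$, and a common kernel $\mbox{Ker}\,r_1=\mbox{Ker}\,r_2=\mbox{Ker}\,r$. The contradiction now comes from a one-line observation about idempotents sharing a kernel. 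Because $p\leq q$ we have $\mbox{Im}\,r_1=\mbox{Im}\,p\subseteq\mbox{Im}\,q=\mbox{Im}\,r_2$, while both ranges are algebraic complements of the \emph{same} subspace $\mbox{Ker}\,r$. But if $F_1\subseteq F_2$ and $F_1\oplus K=F_2\oplus K=H$, then $F_1=F_2$: any $v\in F_2$ splits as $v=f+k$ with $f\in F_1$ and $k\in K$, whence $k=v-f\in F_2\cap K=\{0\}$ and $v=f\in F_1$. Thus $\mbox{Im}\,r_1=\mbox{Im}\,r_2$; since an idempotent is determined by its range and kernel, this forces $r_1=r_2$, hence $\mbox{Im}\,p=\mbox{Im}\,q$ and therefore $p=q$, contradicting $p\neq q$. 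Consequently $U_p\cap U_q=\emptyset$, and the homotopic pair built in the previous step is the one sought.

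I do not expect the disjointness argument itself to be the obstacle — it is elementary once Kovarik's formula is available, and one can phrase it purely algebraically via Lemma \ref{imker} by noting that $r_1r_2=r_1=r_2r_1$ and that $r_2-r_1$ is then an idempotent whose range sits inside $\mbox{Im}\,r_2\cap\mbox{Ker}\,r_2=\{0\}$. The point requiring care is rather the bookkeeping of the reduction: one must check that the pair delivered by Proposition \ref{constructpq} in the corner $zM$ is genuinely homotopic as a pair of projections of $M$, and that Lemmas \ref{corokovarik} and \ref{kovarik} are invoked for $p,q,r$ viewed inside $M\subseteq B(H)$ rather than inside the corner, so that the idempotents $r_1,r_2$ and their range/kernel identities are the ones relevant to the balls $U_p,U_q$ computed in the full algebra.
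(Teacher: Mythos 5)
Your proof is correct, and the key step is genuinely different from --- and more elementary than --- the paper's. The paper starts the same way, taking from Proposition \ref{constructpq} two distinct homotopic projections $p\leq q$ and assuming $r\in U_p\cap U_q$; but it then derives the contradiction by observing that $p$, $p_1=p(p+r-1)^{-2}r$, $p_2=q(q+r-1)^{-2}r$, $q$ form a three-segment polygonal path of idempotents and invoking Theorem 2.4 of \cite{giol05}, a nontrivial external result about segments of idempotents. You replace that machinery with a self-contained observation: by Lemma \ref{corokovarik}, $r\in U_p\cap U_q$ means $\mbox{Ker}\,r$ is a common algebraic complement of $\mbox{Im}\,p$ and of $\mbox{Im}\,q$, and two \emph{nested} subspaces sharing a complement must coincide, forcing $p=q$. (In fact you do not even need Kovarik's formula for this: Lemma \ref{corokovarik} already hands you the two direct-sum decompositions, and your parenthetical algebraic variant via $r_1r_2=r_2r_1=r_1$ also checks out.) What your route buys is twofold. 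First, it isolates a clean general principle --- distinct nested projections in \emph{any} von Neumann algebra have disjoint unit balls --- which makes transparent that infiniteness is needed only to make such a pair homotopic. Second, it repairs a small gap the paper glosses over: Proposition \ref{constructpq} is stated for \emph{properly} infinite algebras, yet the paper's proof applies it to a merely infinite $M$; your reduction via a central projection $z$ with $zM$ properly infinite, together with the remarks that homotopy in $P(zM)$ persists in $P(M)$ and that the disjointness argument is ambient (it takes place in $B(H)$, so the balls are those of the full algebra), handles this explicitly. What the paper's route buys, by contrast, is a connection to the segment/polygonal-path framework of the author's earlier work, but for the purpose of this theorem your argument is shorter and entirely self-contained.
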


\begin{proof}
By Proposition \ref{constructpq}, we can take two distinct homotopic projections $p,q$ such that $p\leq q$.
We claim that $U_p\cap U_q=\emptyset$.
Assume for a contradiction that we can find a projection $r$ in $U_p\cap U_q$.
Then we can construct two idempotents $p_1$ and $p_2$ in $M$ by the formula of Lemma \ref{kovarik}:
$$p_1:=p(p+r-1)^{-2}r\quad\mbox{and}\quad p_2:=q(q+r-1)^{-2}r.$$
It follows from the remarks at the beginning of Section 3 in \cite{giol05} that each of the three segments $[p,p_1]$, $[p_1,p_2]$, and $[p_2,q]$ is contained in the set of idempotents of $M$.
Hence by Theorem 2.4 in \cite{giol05}, we have $p=q$, a contradiction.
\end{proof}

A simple modification of this proof allows to show that there exists a sequence $(p_k)$ of projections such that, for every $k\neq l$, $p_k$ and $p_l$ be homotopic and $U_p\cap U_l=\emptyset$.
For this it suffices to take $(e_{n,m})$ a sequence of pairwise homotopic and orthogonal projections such that 
$$1=\sum_{n,m} e_{n,m}.$$
Then if we set 
$$
p_k=\sum_{n=1}^k \sum_{m=1}^\infty e_{n,m}$$
we obtain such a sequence.


\end{document}